\newcommand{\bN}{\mathbb{N}}
\newcommand{\bR}{\mathbb{R}}
\newcommand{\bP}{\mathbb{P}}
\newcommand{\bE}{\mathbb{E}}
\newtheorem{definition}{Definition}[section]
\newtheorem{corollary}{Corollary}[section]
\newtheorem{theorem}{Theorem}[section]
\newtheorem{lemma}{Lemma}[section]
\newtheorem{remark}{Remark}[section]
\begin{document}

\bibliographystyle{amsplain}

\author{Hiroya Hashimoto}
\address[H.~Hashimoto]{Sanwa Kagaku Kenkyusho Co., Ltd.}
\email{hiro\_hashimoto@skk-net.com}

\author{Takahiro Tsuchiya}
\address[T.~Tsuchiya]{School of Computer Science and Engineering, The University of Aizu}
\email[Corresponding author]{suci@probab.com}

\title{
Convergence rate of 
stability problems of SDEs 
with (dis-)continuous coefficients
}



\maketitle

\begin{abstract}
We consider the stability problems of one dimensional SDEs 
when the diffusion coefficients satisfy the so called Nakao-Le Gall condition.  
%
The explicit rate of convergence of the stability problems 
are given by the Yamada-Watanabe method without the drifts.  
We also discuss the convergence rate for the SDEs driven by the symmetric $\alpha$ stable process. 
These stability rate problems are extended to the case where 
the drift coefficients are bounded and in $L^1$. 
It is shown that the convergence rate is invariant under 
the removal of drift method for the SDEs driven by the Wiener process. 
\end{abstract}

\section{Introduction} 
Consider the following sequence of 
one-dimensional stochastic differential equations, SDEs for short, 
\begin{equation}\label{sde:stability}
X_n (t) = X_n (0)+\int_{0}^t b_n (X_n (s)) ds+\int_{0}^t \sigma_n (X_n (s)) dW_s, 
\end{equation}
and consider the solution  $X$ given by 
\begin{equation}\label{sde0}
X(t) = X(0)+\int_{0}^t b (X(s)) ds+\int_{0}^t \sigma (X(s)) dW_s, 
\end{equation}
where $\{ W_s \}_{s \geq 0 }$ is a Wiener process and 
$b_n : \bR \rightarrow \bR$ and $\sigma_n : \bR \rightarrow \bR$ for $n \in \bN $ are coefficients 
which tend to $b$ and $\sigma$  respectively in some sense, as $n \rightarrow \infty$. 
The convergence of the sequence $\{ X_n \}_{n \in \bN}$ to $X$ 
named as stability problems was introduced by Stroock and Varadhan in \cite{SV1979} 
to solve the martingale problems for unbounded coefficients $b$ and $\sigma$.   
The stability problem in the {\it strong} sense was treated by Kawabata-Yamada \cite{KY} 
in the case where the diffusion coefficients are H\"older continuous of exponent $\alpha \geq 1/2$. 
Le Gall  investigated  the stability problems,  
when the diffusion coefficients are positive and squared finite quadratic variation in \cite{LeGall1983}.  
Kaneko and Nakao showed in \cite{KN1988} that 
the pathwise uniqueness implies the stability property 
if the coefficients satisfy some condition on the modulus continuity  
or if the diffusion coefficient is positive definite. 

The rate of convergence of the Euler-Maruyama scheme 
to the solution has been discussed by Deelstra and Delbaen in \cite{Del1998} 
and their results are considerably generalized  by Gy\"ongy and R\'asonyi in \cite{GR2011}. 
In their investigations, the Yamada-Watanabe method introduced in \cite{WY}, \cite{YW} and \cite{Y1978} plays essential roles. 
Gy\"ongy and R\'asonyi  in \cite{GR2011} obtained the rate of convergence 
in the case where the diffusion coefficients are 
$(1/2 +  \gamma)$-H\"older continuous with the suitable drift coefficients in $L^1$. 
It is remarkable that the rate of convergence is given by $n^{-\gamma}$ where $ \gamma > 0$ 
and also is given by $(\log n)^{-1}$ in the case where $\gamma=0$. 

These results suggest that the rate of convergence of the stability problems may depend also on the 
modulus continuity or irregularity of diffusion coefficients. 
Since the coefficient may be discontinuous 
under the Nakao-Le Gall condition, 
it seems to be very interesting to investigate the rate of convergence of the stability problems. 

The present paper is organized as follows: 
Section \ref{sec:notation} is devoted to the assumption and preliminaries. 
Our main result is shown in Section \ref{stability problem}. 
Let us consider first drift-less system 
\begin{align}
X_n (t)&=X_n (0)+\int_{0}^t \sigma_n (X_n (s)) dW_s, \label{sde without drifts}  \\
X     (t)&=X     (0)   +\int_{0}^t \sigma (X(s)) dW_s, \label{sde without drifts 2}
\end{align}
for $t \geq 0$ and $X_n (0) \equiv X (0)$. 
These diffusion coefficients satisfy the so called Nakao-Le Gall condition 
(See Definition $\ref{conditionC}$ below).  
Assume that there exists a positive constant $C_0$ such that 
\begin{align*} 
\int_{\bR} | \sigma_n (x) - \sigma (x)  | dx \leq C_0 n^{-1} 
{\rm \ or \ }
\sup_{x \in\bR} | \sigma_n (x) - \sigma (x)  |  \leq C_0 n^{-1},
\end{align*}
for all $\in \bN$. 
It will be shown that there exist positive constants $C_p \ (p \geq 1)$ such that 
\begin{align*}
\bE \left[ \left| X (t) - X_n (t) \right| \right] \leq C_1 (\log n)^{-\frac{1}{2}} \ ( 0 \leq t \leq T )
\end{align*}
and
\begin{align*}
\bE \left[ \sup_{0 \leq t \leq T} \left| X (t) - X_n (t) \right|^p \right] \leq C_p (\log n)^{-\frac{p}{4(p+1)}} 
\end{align*}
holds for $p > 1$ and $n>2$. 
In Section \ref{section:sas}, 
we also discuss 
the convergence rate of the stability problems 
in the case where the SDEs are driven by 
a symmetric $\alpha$ stable process without the drift for $\alpha \in (1,2)$. 
Finally, the stability problems with the drift coefficients 
will be discussed using the removal of drift method 
in Section \ref{Removal invariant}.

%
%
%
%
\section{Assumptions and Preliminaries}\label{sec:notation}
On a probability space $(\Omega, \mathcal{F}, \bP )$ 
equipped with the filtration $\{ \mathcal{F}_t \}_{t \geq 0}$ satisfying the usual condition, 
let us consider the stochastic differential equations $(\ref{sde:stability})$ and $(\ref{sde0})$. 
For the simplify, we assume that $X (0) \equiv X_n (0) $. 
The following  condition 
on the diffusion coefficients was proposed by Nakao \cite{N1972} 
and modified by Le Gall \cite{LeGall1983}.   
\begin{definition}\label{conditionC}
In this paper, we say that a real valued function $\sigma$ satisfies the Nakao-Le~Gall condition and write $\sigma \in \mathcal{C}_{NL} (\epsilon, f)$
if $\sigma$ satisfies the following statements:   
\begin{enumerate}
\item There exists a positive real number $\epsilon$ such that 
\begin{equation*}
\epsilon  \leq \sigma (x) 
\end{equation*}
holds for any $x$ in $\mathbb{R}$. 
\item There exists an increasing function $f$ such that 
\begin{align*}
|\sigma (x) - \sigma (y) |^2 \leq | f(x) - f(y) |
\end{align*}\label{conditionC2}
holds for every $x$ and $y$ in $\mathbb{R}$. 
\end{enumerate}
In addition, 
if the function $f$ is bounded on an interval $I \subset \bR$ as follows: 
\begin{align*}
\| f \|_{I, \infty} :=\sup_{x \in I} |f(x)| < \infty, 
\end{align*}
then we denote 
$\sigma \in  \mathcal{C}_{NL} (\epsilon, \| f \|_{I, \infty})$. 
In particular, in the case where $I=\bR$,  
we write $\sigma \in  \mathcal{C}_{NL} (\epsilon, \| f \|_{\infty})$ 
and $\| f \|_{\infty} :=\| f \|_{\bR, \infty}$. 
\end{definition}

\begin{remark}
For the equation $(\ref{sde0})$, 
if $b$ is bounded measurable function and 
$\sigma \in \mathcal{C}_{NL} (\epsilon, \| f \|_{\infty})$, 
then there exists a unique strong solution $X$, 
see Theorem $1.3~(3)$ of the paper \cite{LeGall1983}. 
\end{remark}

\begin{remark}\label{rem:disc}
If the Nakao-Le~Gall condition holds,  $\sigma \in \mathcal{C}_{NL} (\epsilon, f )$, 
we can select a sequence of smooth functions $\{ f_l \}_{l \in \mathbb{N}}$ such that 
for continuous points $x, y \in \bR$,  
\[
|\sigma (x) - \sigma (y) |^2 \leq | f_l (x) - f_l (y) | 
{\rm  \ and \ } f_l (x) \uparrow f (x) \ (l \rightarrow \infty) .
\]
Then for an  progressively measurable process $X$ 
we have that
\[
\int_{0}^{t} f(X(s)) ds = \lim_{l \to \infty} \int_{0}^{t} f_l (X(s)) ds
\]
holds almost surely. 
We use this fact in the proof of Theorem \ref{Thm:main1} 
through \ref{Thm:main3}. 
\end{remark}

Now, we shall introduce the Yamada-Watanabe method. 
Take a decreasing sequence $( a_m )_{ m \in \bN }$ 
of positive numbers 
satisfying $\infty > a_1 > \cdots > a_m > \cdots \downarrow 0$ and 
$\int_{a_{m}}^{a_{m-1}} x^{-1} \, dx=m $. 
Let us consider a smooth symmetric around the origin function $\varphi_{m} (\cdot)$ 
with support in $(-a_{m-1}, -a_{m})$ and $(a_{m}, a_{m-1})$ such that 
$0 \leq \varphi_{m} (x) \leq {2} \left( x m \right)^{-1}$ holds 
and $\int_{\bR}\varphi_{m} (y) dy=1 $. 
For all $x \in \bR$ define 
\[
u_m (x) = \int_{0}^{|x|} \int_0^y \varphi_m (z) \, dz \, dy.
\]
Then we have 
\begin{align}\label{Def:Yfunc}
|x| \geq u_m (x) \geq  -a_{m-1} + |x|
, \ \ 1 \geq | u'_m (x) | 
\end{align}
and $u''_m(x) =\varphi_m (x)$ holds for $x\in \bR$, see \cite{Y1978}. 

\section{A strong convergence rate of the stability problems}\label{stability problem}
In this section, we consider the convergence rate of the stability problems 
for the drift-less system 
$(\ref{sde without drifts})$ and $(\ref{sde without drifts 2})$ under the Nakao-Le Gall condition. 
The stability problem with the $L^1$-convergence 
was discussed by Le Gall \cite{LeGall1983}. 
In more detail, 
the explicit rate of the convergence in $L^p$ sup-norm is obtained as follows: 
\begin{theorem}\label{Thm:main1}
Let $T>0$ and $p \geq 1$ 
and let  
$X_n$ for $n \in \bN$ be solutions of $(\ref{sde without drifts})$ 
and 
$X$ be a solution of $(\ref{sde without drifts 2})$  such that  
 $\bE \left| X (0) \right|^p < \infty $ and $X_n (0) \equiv X (0)$. 
Suppose that $\sigma$ and $\{ \sigma_n \}_{n \in \bN}$ satisfy the Nakao-Le~Gall condition, i.e.,   
$\sigma, \sigma_n \in \mathcal{C}_{NL} (\epsilon, \| f\|_{\infty} )$. 
Assume one of the following stability rate conditions holds: 
there exists a positive constant $C_0$ such that 
\begin{align}\label{con:L1} 
\int_{\bR} | \sigma_n (x) - \sigma (x)  |^{2} dx \leq C_0 n^{-1},  \  n \in \mathbb{N}, 
\end{align}
or 
\begin{align}\label{con:uni} 
\sup_{x \in \bR} | \sigma_n (x) - \sigma (x)  |  \leq C_0 n^{-1},  \  n \in \mathbb{N}. 
\end{align}
Then there exist positive constants $C_p \ (p \geq 1)$ such that 
\begin{align*}
\bE \left[ \left| X (t) - X_n (t) \right| \right] \leq C_1 (\log n)^{-\frac{1}{2}} \ ( 0 \leq t \leq T )
\end{align*}
and
\begin{align}\label{th2:eq2}
\bE \left[ \sup_{0 \leq t \leq T} \left| X (t) - X_n (t) \right|^p \right] \leq C_p (\log n)^{-\frac{p}{4(p+1)}} 
\end{align}
holds for $p > 1$ and $n>2$. 
\end{theorem}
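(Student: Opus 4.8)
The plan is to run the Yamada--Watanabe comparison directly on $Y(t):=X(t)-X_n(t)$, which, in the drift‑free setting, is a continuous martingale with $Y(0)=0$ and $\langle Y\rangle_t=\int_0^t(\sigma(X(s))-\sigma_n(X_n(s)))^2\,ds$. First I would collect the a priori facts: from $\sigma,\sigma_n\in\mathcal{C}_{NL}(\epsilon,\|f\|_\infty)$ together with either (\ref{con:L1}) or (\ref{con:uni}) the coefficients are bounded uniformly in $n$, so $\bE\sup_{t\le T}|X(t)|^p$, $\bE\sup_{t\le T}|X_n(t)|^p$ and $\bE\langle Y\rangle_T$ are finite and bounded uniformly in $n$; by ellipticity $d\langle X\rangle_s\ge\epsilon^2\,ds$ and $d\langle X_n\rangle_s\ge\epsilon^2\,ds$, which converts $ds$ to occupation‑time integrals; and $\bE\sup_a L_T^a(X)$, $\bE\sup_a L_T^a(X_n)$ are finite and uniformly bounded (Tanaka's formula, Doob and Burkholder--Davis--Gundy). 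By Remark~\ref{rem:disc} I may and will assume $f$, hence $\sigma$ and $\sigma_n$, smooth, passing to the limit $l\to\infty$ at the end.

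Apply It\^o's formula to $u_m(Y(t))$: since $|u_m'|\le1$ and $\langle Y\rangle_T\in L^1$ the local‑martingale part has zero mean, and with $u_m''=\varphi_m$,
\[
\bE[u_m(Y(t))]=\tfrac12\,\bE\!\int_0^t\varphi_m(Y(s))\bigl(\sigma(X(s))-\sigma_n(X_n(s))\bigr)^2\,ds .
\]
Adding and subtracting $\sigma(X_n(s))$ and using the second condition of Definition~\ref{conditionC} gives $\bE[u_m(Y(t))]\le I_1+I_2$ with $I_1=\bE\int_0^t\varphi_m(Y(s))|f(X(s))-f(X_n(s))|\,ds$ and $I_2=\bE\int_0^t\varphi_m(Y(s))(\sigma-\sigma_n)^2(X_n(s))\,ds$. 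For $I_2$ I use the crude bound $\varphi_m(y)\le 2/(m a_m)$ on $\operatorname{supp}\varphi_m$: under (\ref{con:uni}) directly $I_2\le 2C_0^2T/(m a_m n^2)$; under (\ref{con:L1}), bounding $ds\le\epsilon^{-2}d\langle X_n\rangle_s$ and using the occupation‑times formula with $\int_\bR(\sigma-\sigma_n)^2\le C_0 n^{-1}$ gives $I_2\le C n^{-1}/(m a_m)$.

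The crux is the estimate $I_1\le C_f/m$, with $C_f$ depending only on $\epsilon,\|f\|_\infty,T$ and $\sup_n\bE\sup_aL_T^a(X_n)$. This is Le~Gall's local‑time argument made quantitative: write $|f(X(s))-f(X_n(s))|=\int_\bR\mathbf{1}_{\{u\ \mathrm{between}\ X(s),X_n(s)\}}\,\mu_f(du)$ for the Stieltjes measure $\mu_f$, $\mu_f(\bR)\le 2\|f\|_\infty$; by Fubini, $\varphi_m(y)\le 2/(m|y|)$, and the fact that on $\operatorname{supp}\varphi_m$ and on the event in the indicator one has $a_m\le|Y(s)|\le a_{m-1}$ and $|X(s)-u|\le|Y(s)|$, so $|Y(s)|^{-1}\le\min(a_m^{-1},|X(s)-u|^{-1})$; now use the occupation‑times formula for $X$, the ellipticity, and $\int_{|r|\le a_{m-1}}\min(a_m^{-1},|r|^{-1})\,dr=2(m+1)$. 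The point, and the part I expect to cost the most work (including keeping the constant independent of the smoothing index $l$), is that the factor $1/m$ from $\varphi_m$ survives the $(m+1)$ produced by this last integral. Granting $I_1\le C_f/m$, from $u_m(x)\ge|x|-a_{m-1}$ we get $\bE|Y(t)|\le C_f/m+C n^{-1}/(m a_m)+a_{m-1}$. Since $\int_{a_k}^{a_{k-1}}x^{-1}dx=k$ one has $a_m=a_1\exp(1-\tfrac{m(m+1)}2)$, so $\log(1/a_m)\sim m^2/2$ and $a_{m-1}=a_me^m$; choosing $m=m(n)$ as large as possible subject to $a_m\ge n^{-1}$ forces $m\sim c\sqrt{\log n}$, makes the middle term $\le C/m$ and $a_{m-1}=a_me^m\ll 1/m$, and yields $\bE|Y(t)|\le C_1(\log n)^{-1/2}$ for $n>2$, the first assertion.

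For the $L^p$ bound with $p>1$: as $Y$ is a continuous martingale, Doob's inequality reduces everything to estimating $\bE|Y(T)|^p$. Applying It\^o to $u_m(Y(t))^p$ one gets, with a second term bounded by $a_{m-1}^{p-1}\bE[u_m(Y(t))]$ and hence negligible,
\[
\bE[u_m(Y(t))^p]\ \le\ \tfrac{p(p-1)}2\,\bE\!\int_0^t|Y(s)|^{p-2}\bigl(\sigma(X(s))-\sigma_n(X_n(s))\bigr)^2\,ds\ +\ (\text{negligible}).
\]
The $(\sigma-\sigma_n)^2$ part of the square is handled as in $I_2$ (times a uniformly bounded moment of $Y$, using $\bE|X(0)|^p<\infty$), and for the $|f(X(s))-f(X_n(s))|$ part one interpolates, via H\"older, between the uniform a priori bound $\sup_{s\le T}\bE|Y(s)|^p<\infty$ and the estimate $\bE|f(X(s))-f(X_n(s))|\le C\,\bE|Y(s)|^{1/2}$, the latter following from $\mu_f(\bR)<\infty$, from the fact that the elliptic one‑dimensional diffusion $X$ has a transition density bounded on $[\delta_0,T]$ and integrably singular at $0$, and from the $L^1$ bound already proved. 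Inserting this and carrying out the same optimization in $m$, while tracking the H\"older exponents, yields $\bE|Y(T)|^p\le C_p(\log n)^{-p/(4(p+1))}$; apart from the estimate for $I_1$, the delicate point here is precisely this exponent bookkeeping (the a priori moments are bounded but not small, so one cannot interpolate against $L^\infty$, which is where the loss from $1/2$ down to $p/(4(p+1))$ comes from).
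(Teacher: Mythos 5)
Your overall architecture (It\^o's formula for $u_m(Y)$, splitting the quadratic term into a ``consistency'' part in $\sigma-\sigma_n$ and a ``modulus'' part controlled by $f$, then optimizing $m=m(n)\sim\sqrt{\log n}$ with $a_m=\exp(-m(m+1)/2)$) matches the paper. But the step you yourself single out as the crux, $I_1\le C_f/m$, is not delivered by the argument you describe, and in fact that argument fails quantitatively. Following your own computation: $\varphi_m(y)\le 2/(m|y|)$ contributes the factor $2/m$, and then for each $u$ you bound $|Y(s)|^{-1}\mathbf{1}_{\{u\ \mathrm{between}\}}$ by $\min(a_m^{-1},|X(s)-u|^{-1})\mathbf{1}_{\{|X(s)-u|\le a_{m-1}\}}$, convert $ds$ to $\epsilon^{-2}\,d\langle X\rangle_s$, and use the occupation-times formula for $X$ together with $\sup_a\bE L_t^a(X)\le c_L$. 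The resulting spatial integral is exactly the one you quote, $\int_{|r|\le a_{m-1}}\min(a_m^{-1},|r|^{-1})\,dr=2(m+1)$, so the bound you obtain is $I_1\le C\,(m+1)/m=O(1)$: the factor $(m+1)$ cancels the $1/m$ completely. An $O(1)$ bound on $I_1$ gives no convergence at all, let alone the rate $(\log n)^{-1/2}$, so the hope that ``the $1/m$ survives'' is precisely where the proof breaks.

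The paper closes this gap by a different device which your decomposition does not capture: it keeps the factor $|Y(s)|$ attached to $\varphi_m$ (so $\varphi_m(Y)\,|Y|\le 2/m$ in full), and rewrites the remaining difference quotient via the smooth approximants of Remark~\ref{rem:disc} as
\begin{equation*}
\frac{|f_l(X(s))-f_l(X_n(s))|}{|Y(s)|}=\int_0^1 f_l'\bigl(Z^\theta(s)\bigr)\,d\theta,\qquad Z^\theta=X+\theta(X_n-X).
\end{equation*}
The interpolated semimartingale $Z^\theta$ has martingale part with integrand $(1-\theta)\sigma(X)+\theta\sigma_n(X_n)\ge\epsilon$, hence $\langle Z^\theta\rangle_t\ge\epsilon^2 t$, and the occupation-times formula applied to $Z^\theta$ (not to $X$) gives $\int_0^t\int_0^1 f_l'(Z^\theta(s))\,d\theta\,ds\le\epsilon^{-2}\int_0^1\int_{\bR}f_l'(a)L_t^a(Z^\theta)\,da\,d\theta$, whose spatial integral is $\int_{\bR}f_l'(a)\,da\le 2\|f\|_\infty$ --- independent of $m$. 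That is what preserves the full $1/m$. You would need to replace your layer-cake estimate for $I_1$ by this interpolation argument (and correspondingly bound $\sup_{\theta}\sup_a\bE[(L_T^a(Z^\theta))^p]$). The same defect propagates into your $L^p$ section, since both your interpolation inequality $\bE|f(X(s))-f(X_n(s))|\le C\,\bE|Y(s)|^{1/2}$ and your use of the $L^1$ rate presuppose the $L^1$ bound you have not yet obtained; note also that the paper's route to (\ref{th2:eq2}) is more elementary than yours --- it estimates $\bE[\langle M^Y\rangle_T^{p/2}]$ by splitting on $\{|Y_n(s)|>y\}$ versus $\{|Y_n(s)|\le y\}$ with $y=(\log n)^{-1/(2(p+1))}$ and Markov's inequality, and needs no transition-density bounds.
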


%
%
\begin{proof}
The boundedness of $\sigma$ and $\sigma_n$ implies that 
\begin{align}\label{Lp-property}
\bE \left[ \sup_{0 \leq t \leq T} |X(t)|^p \right] + \sup_{n \in \bN}\bE \left[ \sup_{0 \leq t \leq T} |X_n (t)|^p  \right] 
< \infty. 
\end{align}
Define 
\begin{align*}
Y_n (t) :=  X (t) - X_n (t) . 
\end{align*}

Let $u_m$ be a function as defined in Section \ref{sec:notation}. 
By $(\ref{Def:Yfunc})$ and then applying 
It\^o's  formula to $u_m \left(Y_n ( \cdot ) \right) $ 
\begin{subequations}
\begin{align}\label{eq:9a}
| X (t) - X_n (t)  | &\leq a_{m-1} +u_m (Y_n (t) ) \\ 
			  &=a_{m-1} +M (t) +J(t), \notag
\end{align}
where $M$ and $J$ are defined as follows:  
\begin{align*}
 M  (t)  &=  \int_0^t u'_m (Y_n (s)) 
 	\left\{  \sigma \left(X (s) \right) -  \sigma_n ( X_n (s) )   \right\} dW_s , \\ 
 J  (t)  &=  \frac{1}{2} \int_0^t u''_m (Y_n (s))  
 	\left| \sigma \left(X (s) \right)  -  \sigma_n ( X_n (s) ) \right|^2 ds . 
\end{align*}
Now we also define $J^\sigma$ and $J^Y$, 
\begin{align}\label{eq:9b}
 J  (t)  &\leq  \int_0^t u''_m (Y_n (s))  \left| \sigma \left(X (s) \right)  - \sigma_n ( X (s) )  \right|^2 ds  
 \\ &  \qquad   
 +   \int_0^t u''_m (Y_n (s))   \left| \sigma_n \left(X (s) \right)  - \sigma_n ( X_n (s) )  \right|^2 ds ,\notag 
  \\ &=: J^\sigma (t)  +  J^Y (t) , \  {\rm  say \ for \  } 0 \leq t \leq T \notag. 
\end{align}
Note that by $(\ref{eq:9b})$
\begin{align}\label{eq:9f}
| X (t) - X_n (t)  |^p  
	&\leq \left| a_{m-1} + | M (t) |  +J^\sigma (t)  +  J^Y (t) \right|^p \\
	&\leq 4^{p-1} \left\{ a_{m-1}^p  +|M (t)|^p + \left| J^\sigma (t) \right|^p  + | J^Y (t)|^p \right\}. \notag 
\end{align}
Then taking the sup-norm over the time interval $[0,T]$, 
we have 
\begin{align}\label{eq:9f}
&\sup_{0 \leq t \leq T} | X (t) - X_n (t)  |^p \\
	&\leq 4^{p-1} \left\{ a_{m-1}^p 
	+ \sup_{0 \leq t \leq T}|M (t)|^p +\left| J^\sigma (T) \right|^p   + | J^Y (T)|^p \right\}. \notag 
\end{align}
Selecting the smooth functions $ f_l \  ( l \in \bN )$ in Remark \ref{rem:disc}, we have for $x, y \in \bR$, 
\[ {f_l (x) - f_l (y)}= (x-y )\int_0^1 f_l^{'} (x + \theta (y-x)) d\theta . \]
Then we define and consider $ J^{Y}_l$ as follows: 
\begin{align*}
 J^{Y}_l (t) 
  &:= \int_0^t u''_m (Y_n (s))   \left| f_l \left(X (s) \right)  - f_l ( X_n (s) )  \right| ds
\\&= \int_0^t \varphi_m (Y_n (s))  |Y_n(s)| 1_{\left( 0<|Y_n(s)|  \right)} \frac{\left|  f_l \left(X (s) \right) -  f_l \left(X_n  (s)  \right)  \right| }{|Y_n(s)|} ds    \notag
  \\ 
  &\leq
 {2}{m}^{-1}    \int_0^t  \int_0^1  f_l^{'} ( Z^{\theta}(s)) d\theta ds 
\end{align*} 
holds where 
for $\theta \in [0,1]$ we define
\[ Z(t) \equiv Z^{\theta}(t):= X (t)  + \theta \left( X_n ( t ) -X (t) \right). 
\]
The fact that the martingale part of $Z$ is expressed by 
\[
\int_0^{t} \left\{ (1-\theta) \sigma \left( X ( s) \right) +\theta  \sigma_n \left( X_n ( s) \right)  \right\}  dW_s  =: \int_0^t \tilde{\sigma} (s)dW_s, 
\]
and $(1-\theta)\sigma(x) +\theta \sigma_n (x) \geq \epsilon$ for $x \in \bR$ implies that 
we have $\langle Z, Z \rangle_t  \geq \epsilon^2 t $. 
Now the occupation times formula implies that we have 
\begin{align*}
\int_0^t  \int_0^1   f_l^{'}  ( Z^{\theta }(s) ) d\theta ds  
&\leq  \epsilon^{-2}  \int_0^1  \int_{-\infty}^{\infty}   f_l^{'}(a)  L_t^a (Z^{\theta}) da d\theta 
\end{align*}
where $L_t^a (Z)$ stands for the local time of $Z$ accumulated at $a$ until time $t$. 
By the Meyer-Tanaka formula and $L^p$ integrability of the solutions $X$ and $X_n$ of $(\ref{Lp-property})$, we have 
\begin{align}\label{eq:9h}
c_{L} := \sup_{\theta \in [0,1]} \sup_{a \in \bR}  \bE [ \left(  L_T^a (Z^{\theta}) \right)^p ]  < \infty  . 
\end{align}  
Then we obtain 
\begin{align*}
\bE [| J^Y_l (t)|^p ] 
	&\leq
	2^p (m\epsilon^2)^{-p}  \bE [ \left(   \int_0^1  \int_{-\infty}^{\infty}   f_l^{'}(a)  L_t^a (Z^{\theta}) da d\theta  \right)^p   ] 
	\\&\leq 
	2^p (m\epsilon^2)^{-p} 
	\left(   \int_0^1  \int_{-\infty}^{\infty}   f_l^{'}(a)  da d\theta  \right)^{p-1}   
	\bE [   \int_0^1  \int_{-\infty}^{\infty}   f_l^{'}(a)  \left(  L_t^a (Z^{\theta})  \right)^p da d\theta   ] 
	\\&\leq 
	2^p (m\epsilon^2)^{-p} 
	\| f_l \|^{p-1}_{\infty}   
	\int_0^1  \int_{-\infty}^{\infty}   f_l^{'}(a)  \bE [  \left(  L_t^a (Z^{\theta})  \right)^p ] da d\theta    
	\\&\leq 
	2^p (m\epsilon^2)^{-p}  \| f_l \|^{p}_{\infty}  c_L.  
\end{align*}
By the monotone convergence theorem, we have $ J^{Y}_l \rightarrow  J^{Y}$ as $l \rightarrow \infty$. 
the $L^p$ estimate for $J^Y (T)$ in $(\ref{eq:9b})$,  
\begin{align*}
\bE \left| J^Y (T) \right|^p &\leq 2^p (m\epsilon^2)^{-p} \| f \|^{p}_{\infty}  c_L . 
\end{align*}  

On the other hand, let us consider $J^\sigma$ in $(\ref{eq:9b})$. 
By the construction of $\varphi_m$ we have
\begin{align*}
J^\sigma (t) 
&= \int_0^t u''_m (Y_n (s))  \left| \sigma \left(X (s) \right)  - \sigma_n ( X (s) )  \right|^2 ds 
\\ &\leq 2 (m a_m )^{-1}  \int_0^t  \left| \sigma \left(X (s) \right)  - \sigma_n ( X (s) )  \right|^2 ds. 
\end{align*}

If the convergence rate $(\ref{con:L1})$ holds, 
then using H\"older's inequality we obtain 
\begin{align*}
&\left( \int_0^t  \left| \sigma \left(X (s) \right)  - \sigma_n ( X (s) )  \right|^2 ds \right)^p 
\\ &\leq \epsilon^{-2p}\left( \int_{\bR}  \left| \sigma \left(a \right)  - \sigma_n ( a )  \right|  L_t^a (X) da \right)^{p}
\\ &\leq \epsilon^{-2p}\left( \int_{\bR}  \left| \sigma \left(a \right)  - \sigma_n ( a )  \right| da \right)^{p-1}
	\int_{\bR}  \left| \sigma \left(a \right)  - \sigma_n ( a )  \right| | L_t^a (X) |^p da
\\ &\leq \epsilon^{-2p}\left( C_0 n^{-1} \right)^{p-1}
	\int_{\bR}  \left| \sigma \left(a \right)  - \sigma_n ( a )  \right| | L_t^a (X) |^p da,
\end{align*}
and then we obtain 
\begin{align*}
\bE \left( \int_0^t  \left| \sigma \left(X (s) \right)  - \sigma_n ( X (s) )  \right|^2 ds \right)^p 
\leq \epsilon^{-2p}\left( C_0 n^{-1} \right)^{p-1} \times  C_0 n^{-1}  c_L
= \epsilon^{-2p}c_L \left( C_0 n^{-1} \right)^{p} ,
\end{align*}
where $L_t^a (X)$ is the local time of $X$ such that  
\begin{align*}
 \sup_{a \in \bR}  \bE [ \left( L_t^a (Z^{0}) \right)^p ] =  \sup_{a \in \bR} \bE [ \left( L_t^a (X)\right)^p ] \leq c_{L}. 
\end{align*}

On the other hand, the rate of convergence of $(\ref{con:uni})$ implies that 
\begin{align*}
&\left( \int_0^t  \left| \sigma \left(X (s) \right)  - \sigma_n ( X (s) )  \right|^2 ds \right)^p \leq (t C_0 n^{-1})^p. 
\end{align*}

Then, under the stability rate condition  $(\ref{con:L1})$ or $(\ref{con:uni})$,  
the term of $J^\sigma  $ is estimated in $L^p$ as follows: 
\begin{align*}
\bE [|J^\sigma (T) |^p ] 
	&\leq  2^p (m a_m )^{-p} \left( \epsilon^{-2p}  c_{L} +T^p  \right)(C_0 n^{-1})^{p}. 
\end{align*}
In short, combining these estimates with $(\ref{eq:9b})$
we obtain 
\begin{align}\label{eq:9e}  
\bE \left| J (T) \right|^p \leq  2^p {m}^{-p}  c_{JY}   +  2^p (m a_m n )^{-p}  c_{J\sigma} , 
\end{align}  
where we define 
\begin{align*}  
c_{JY} &:= \epsilon^{-2p}  \| f \|^{p}_{\infty}  c_L  
\\ 
c_{J\sigma} &:=  \left( \epsilon^{-2p}  c_{L} +T^p  \right) C_0^{p}. 
\end{align*}

Now let us estimate $Y_n$ in the case of $p=1$, 
\begin{align*}
| Y_n (t)  | 
	&\leq  a_{m-1} +M (t)  + J^\sigma (t)  +  J^Y (t). 
\end{align*}	
Then we obtain 
\begin{align}\label{eq:9c}
\bE \left[ \left| Y_n (t) \right| \right] & \leq a_{m-1}+
2c_{JY} {m}^{-1}   +  2c_{J\sigma} (m a_m n )^{-1}  =  A_{m,n}, \ {\rm \ say. }
\end{align}
%
Here, 
in the Yamada-Watanabe method 
let us choose $( a_m )_{m \in \bN  }$ as $a_m = \exp \left( -m(m+1)/2 \right), \ a_0=1$  
and select a sequence $(m_n)_{n \in \bN}$ such that 
\begin{align}\label{eq:9g}
\frac{1}{ a_{m_n} n}  \leq 1 
\end{align}
holds for $n > 2$. 
Since we have that $ a_{m-1} \leq {1}/{m}$ for any $m \in \bN$, 
there exists a finite positive number $c_{a}$ such that 
$A_{m_n,n}$ in $(\ref{eq:9c})$ is bounded by  
\begin{align}\label{eq:9d}
\bE \left[ \left| Y_n (t) \right| \right]  \leq A_{m_n,n} \leq   c_{a}    ( \log n)^{-\frac{1}{2}}. 
\end{align}
\end{subequations}

%
%
We shall obtain the  $L^p$-estimate $(\ref{th2:eq2})$ 
from this $L^1$-estimate $(\ref{eq:9d})$. 
Let us estimate the quadratic process $\langle M \rangle$, 
\begin{align*}
\langle M \rangle_T 
&\leq \int_0^t \left|  \sigma \left(X (s) \right) -  \sigma_n \left(X_n (s) \right)   \right|^2 ds \\
&\leq
\int_0^T  \left|  \sigma \left(X (s) \right) -  \sigma_n \left(X (s) \right)   \right|^2 ds 
+
\int_0^T  \left|  \sigma_n \left(X (s) \right) -  \sigma_n \left(X_n (s) \right)   \right|^2 ds 
\\
&=: \langle M^\sigma \rangle_T  + \langle M^Y \rangle_T, \ {\rm say}.
\end{align*}
By the same argument as we estimate $\bE | J(T) |^p $ in $(\ref{eq:9e})$ with $c_{J\sigma}$,  
we have 
\begin{align*}
\bE [\langle M^\sigma \rangle_T^{\frac{p}{2}} ] 
\leq \sqrt{ \bE [\langle M^\sigma \rangle_T^{{p}} ] }
&\leq   \sqrt{ c_{J\sigma}  n^{-p} }. 
\end{align*}
On the other hand, for any positive number $y$ we have 
\begin{align*}
&\langle M^Y \rangle_T
=
\int_0^T  \left|  \sigma_n \left(X (s) \right) -  \sigma_n \left(X_n (s) \right)   \right|^{2} ds  
\\&
\leq \int_0^T  \left| f_l \left(X (s) \right) -  f_l \left(X_n (s) \right)   \right| 1_{(|Y_n (s)| >y)}ds 
\\&
+ \int_0^T  \left| f_l \left(X (s) \right) -  f_l \left(X_n (s) \right)     \right| 1_{(|Y_n (s) | \leq y)}ds 
\\
&\leq
2 \| f \|_{\infty}  \int_0^T  1_{(|Y_n (s)| >y)} ds 
+
y \int_0^T   \int_0^1   f_l' (Z^\theta (s) )   d\theta ds .
\end{align*}
Therefore, putting $y=\left( \log n \right)^{-\frac{1}{2(p+1)}}$, 
there exists a positive number $c_{M}$ such that 
\begin{align*}
&\bE [\langle M^Y \rangle_t^{\frac{p}{2}}] \leq \sqrt{ \bE [\langle M^Y \rangle_t^{p}] }
\\&\leq 
\sqrt{  2^{p} \max \left( (2 \| f \|_{\infty})^{p}  T^{p}  y^{-1} ( \log n )^{-\frac{1}{2}}, \  
y^{p}  { c_{JY} } \right)}
\leq c_{M} \left( \log n \right)^{-\frac{p}{4(p+1)}}
. 
\end{align*}
Then 
by combining these estimates with  $(\ref{eq:9f})$ 
and the Burkholder-Davis-Gundy Inequality 
we observe that 
there exists a constant value $c_p$ such that 
\begin{align*}
&\bE [ \sup_{0 \leq t \leq T} | X (t) - X_n (t)  |^p  ]
	\\&\leq c_p \left\{ a_{m-1}^p 
	+\bE [\langle M^{\sigma} \rangle_t^{\frac{p}{2}}] +\bE [\langle M^Y \rangle_t^{\frac{p}{2}}] 
	+\bE [  \left| J^\sigma (T) \right|^p ]   + \bE [ | J^Y (T)|^p] \right\}. 
	\\&\leq c_p \left\{ a_{m-1}^p 
	+\sqrt{c_{J\sigma}  n^{-p}} +\bE [\langle M^Y \rangle_t^{\frac{p}{2}}] 
	+2^p c_{JY} {m}^{-p}   +  2^p c_{J\sigma} (m a_m n )^{-p}  \right\}. 
\end{align*}
Since we have selected the sequence $(m_n)_{n \in \bN}$ as in  $(\ref{eq:9g})$, 
\begin{align*}
&\bE [ \sup_{0 \leq t \leq T} | X (t) - X_n (t)  |^p  ]
	\\&\leq c_p (\log n)^{-\frac{p}{4(p+1)}} \left\{ 
	1
	+ \sqrt{c_{J\sigma}}   +c_{M}
	+2^p c_{JY}   +  2^p c_{J\sigma}\right\}, 
\end{align*}
for $n>2$. 
Therefore we obtain the desired result. 
\end{proof}

\if0
\begin{remark}\label{YmethodOpt}
 If one uses the Yamada-Watanabe method, it sufficient to have 
\begin{align*}\label{Ymethod1}
\lim_{m, n \to \infty } \frac{1}{ n a_m} < \infty \ {\rm \ for \ }  a_m =e^{-\frac{1}{2}m(m+1)}. 
\end{align*}
Therefore, 
the convergence rate of the stability problem is estimated by $\log n$. 
\[
\left| Y_n (t) \right| \leq  H_{\epsilon}  n^{-\frac{1}{2} +\delta}
\]
holds for any $\delta$ with  $0 < \delta < 1/2$, 
where $H_\epsilon $ is a finite random value. 

This says that 
the convergence rate is strongly determined by the regularity of the coefficient. 
\end{remark}
\fi

\section{SDEs driven by symmetric $\alpha$-stable processes}\label{section:sas} 
In this section, 
we introduce a symmetric $\alpha$ stable process $Z$ and 
the sequence of SDEs driven by $Z$: 
\begin{align}
X(t) &= X(0) + \int_0^t \sigma (X(s-)) dZ_s ,\label{sde:sas1}  \\
X_n (t) &= X_n (0) + \int_0^t \sigma_n (X_n (s-)) dZ_s\ \ ( n \in \bN ), 
\end{align}
where 
both $\{ \sigma_n \}_{n \in \bN}$ and $\sigma$ satisfy 
the following Belfadli-Ouknine condition. 
\begin{definition}
We say that 
a function $\sigma$ satisfies the Belfadli-Ouknine condition if 
$\sigma$ satisfies 
Definition $\ref{conditionC}$ $(1)$ and the modified condition $(2)'$ of Definition $\ref{conditionC}$: 
there exists an increasing function $f$ such that 
\begin{align*}
| \sigma (x)- \sigma (y) |^{\alpha} &\leq | f(x) - f(y)| {\rm \ and \ }  \| f \|_{\infty} < \infty
\end{align*}
holds for any $x, y \in \bR$. 
\end{definition}
Under the assumptions, 
the pathwise uniqueness holds for $X$ and also $X_n$, $n \in \bN$, which is proven by Belfadli and Ouknine in \cite{BO2008}. 
Moreover, it is shown by Hashimoto \cite{Hashimoto2013} 
that the solution is realized by the stability problem . 
The local time is again the key to compute the strong convergence rates via stability problem. 
For the results of the local time for the symmetric $\alpha$ stable process see 
K. Yamada \cite{KYamada2002} and Salminen and Yor \cite{YorSalminen2007}. 
\begin{theorem}\label{Thm:main2}
Let $\alpha$ be a positive number with $1< \alpha <2$. 
Assume $X_n (0) \equiv X (0)$ with $\bE \left| X_n (0) \right|^{\alpha -1} < \infty $. 
If $\sigma$ and $\{ \sigma_n \}_{n \in \bN}$ satisfy the Belfadli-Ouknine condition 
and either of the stability rate condition $(\ref{con:L1})$ or $(\ref{con:uni})$, 
then there exists a positive constant $C_2$ 
such that 
\begin{align}
\bE \left[ \left| X (t) - X_n (t) \right|^{\alpha-1} \right] \leq  C_2  (\log n)^{-\frac{\alpha-1}{2}}
\end{align}
holds for  $0 \leq t \leq T$ and $n>2$. 
\end{theorem}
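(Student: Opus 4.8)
The plan is to mimic the Yamada–Watanabe argument of Theorem \ref{Thm:main1}, but replacing the quadratic It\^o formula by the $\alpha$-stable analogue and tracking the exponent $\alpha-1$ in place of $1$. First I would set $Y_n(t) := X(t) - X_n(t)$ and introduce the Yamada–Watanabe mollifiers $u_m$, but now calibrated to the stable setting: since jumps are present, instead of applying the classical It\^o formula I would use the It\^o formula for semimartingales with jumps applied to $u_m(Y_n(\cdot))$. This yields a decomposition
\begin{align*}
|X(t)-X_n(t)| \leq a_{m-1} + M(t) + J^{\mathrm{jump}}(t),
\end{align*}
where $M$ is the (compensated) martingale part coming from the stochastic integral against $Z$ and $J^{\mathrm{jump}}$ collects the compensator term
\begin{align*}
J^{\mathrm{jump}}(t) = \int_0^t \!\! \int_{\bR} \Big\{ u_m\big(Y_n(s-) + z(\sigma(X(s-)) - \sigma_n(X_n(s-)))\big) - u_m(Y_n(s-)) - z\, u_m'(Y_n(s-))(\sigma(X(s-)) - \sigma_n(X_n(s-))) \Big\} \frac{dz}{|z|^{1+\alpha}}\, ds.
\end{align*}
Using the bound $|u_m(x+h) - u_m(x) - h\,u_m'(x)| \leq \tfrac12 \|u_m''\|_\infty\, h^2 \wedge 2|h|$ together with $0 \leq u_m'' = \varphi_m \leq 2(xm)^{-1}$ and the finiteness of $\int_{\bR}(|z|^2\wedge |z|)|z|^{-1-\alpha}dz$ for $1<\alpha<2$, the integrand is controlled by a constant times $m^{-1}|\sigma(X(s-)) - \sigma_n(X_n(s-))|^\alpha$ on the support of $\varphi_m$. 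This is exactly where the Belfadli–Ouknine exponent $\alpha$ enters: the H\"older-type control of $u_m''$ against the L\'evy measure converts the $|\cdot|^\alpha$-modulus of $\sigma$ into a bound involving $|f(X(s-)) - f(X_n(s-))|$.

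Next I would split $J^{\mathrm{jump}} \leq J^\sigma + J^Y$ exactly as in \eqref{eq:9b}, with $J^\sigma$ involving $|\sigma(X(s-)) - \sigma_n(X(s-))|^\alpha$ and $J^Y$ involving $|\sigma_n(X(s-)) - \sigma_n(X_n(s-))|^\alpha \leq |f_l(X(s-)) - f_l(X_n(s-))|$. For $J^Y$, I would write $f_l(x) - f_l(y) = (x-y)\int_0^1 f_l'(x+\theta(y-x))d\theta$ and invoke the occupation-times formula for the stable integral $Z^\theta := X + \theta(X_n - X)$; since the driving noise of $Z^\theta$ has diffusive part of strictly positive intensity (the integrand $(1-\theta)\sigma + \theta\sigma_n \geq \epsilon$), the local time $L_t^a(Z^\theta)$ exists, and — using the results of K.~Yamada \cite{KYamada2002} and Salminen–Yor \cite{YorSalminen2007} quoted before the theorem — one gets $\sup_{\theta,a} \bE[L_T^a(Z^\theta)] < \infty$ under the moment assumption $\bE|X(0)|^{\alpha-1} < \infty$. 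This gives $\bE[J^Y(T)] \leq C m^{-1}\|f\|_\infty$. For $J^\sigma$, the factor $(ma_m)^{-1}$ times $\int_0^t |\sigma(X(s-)) - \sigma_n(X(s-))|^\alpha ds$, and then either $(\ref{con:L1})$-type (via occupation times: bound by $\epsilon^{-\alpha}\int_{\bR}|\sigma - \sigma_n|^\alpha L_t^a(X)da \leq \epsilon^{-\alpha} C_0 n^{-1}$, absorbing the extra power since $|\sigma - \sigma_n|^\alpha \leq (2\|\sigma\|_\infty)^{\alpha-1}|\sigma - \sigma_n|$) or $(\ref{con:uni})$-type (pointwise $\leq (tC_0 n^{-1})^\alpha$) gives $\bE[J^\sigma(T)] \leq C(ma_m n)^{-1}$.

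Finally, collecting the martingale term via $\bE|M(t)| \leq \bE[M(t)^2]^{1/2}$ (or a BDG-type inequality for the stable integral, controlling the jump quadratic variation by the same $m^{-1}$ and $(ma_mn)^{-1}$ bounds), I arrive at
\begin{align*}
\bE[|Y_n(t)|^{\alpha-1}] \leq \bE[|Y_n(t)|]^{\alpha-1} \leq \big(a_{m-1} + C m^{-1} + C(ma_m n)^{-1}\big)^{\alpha-1},
\end{align*}
using Jensen since $\alpha - 1 \in (0,1)$. Choosing $a_m = \exp(-m(m+1)/2)$ and $m_n$ so that $(a_{m_n} n)^{-1} \leq 1$ as in $(\ref{eq:9g})$, and noting $a_{m-1} \leq 1/m$ with $m_n \asymp (\log n)^{1/2}$, yields the claimed rate $(\log n)^{-(\alpha-1)/2}$ for $n>2$. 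The main obstacle I anticipate is the first paragraph: correctly setting up the jump-It\^o expansion of $u_m(Y_n(\cdot))$ and verifying that the compensator term really is controlled by $m^{-1}$ times the $\alpha$-modulus — this requires the delicate interplay between the uniform bound $\varphi_m \leq 2(xm)^{-1}$ on the small support of $\varphi_m$, the quadratic-vs-linear truncation of $u_m$, and the precise tail $|z|^{-1-\alpha}$ of the stable L\'evy measure; every other step is a routine transcription of the Wiener case with the local-time estimates now taken from the cited stable-process literature.
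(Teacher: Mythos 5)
Your overall architecture (decompose into $a_{m-1}$ plus martingale plus compensator, split the compensator into $J^\sigma + J^Y$, estimate $J^Y$ via local times of the interpolated process and $J^\sigma$ via the stability rate condition, then optimize $m=m_n$ with $a_m=\exp(-m(m+1)/2)$) matches the paper. But the step you yourself flag as ``the main obstacle'' is a genuine gap, and it does not close the way you hope. If you apply the jump It\^o formula to $u_m(Y_n)$, i.e.\ to the mollification of $|x|$, the compensator integrand is
\[
\int_{\bR}\bigl|u_m(x+zh)-u_m(x)-zh\,u_m'(x)\bigr|\,\frac{dz}{|z|^{1+\alpha}}
= |h|^{\alpha}\int_{\bR}\bigl|u_m(x+w)-u_m(x)-w\,u_m'(x)\bigr|\,\frac{dw}{|w|^{1+\alpha}},
\]
and the only available bounds on the Taylor remainder are $\tfrac{w^2}{2}\|\varphi_m\|_\infty \leq w^2 (m a_m)^{-1}$ and $|w|$ (the latter since $\int\varphi_m=1$). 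Splitting at $|w|=ma_m$ gives a bound of order $(m a_m)^{-(\alpha-1)}$, not $m^{-1}$; with $a_m=e^{-m(m+1)/2}$ this \emph{diverges} as $m\to\infty$. The pointwise bound $\varphi_m(y)\leq 2(ym)^{-1}$ cannot rescue this, because a single jump of size $w$ can traverse the entire support of $\varphi_m$, so only the worst-case value $2/(ma_m)$ is usable. This is precisely why the classical Yamada--Watanabe function $|x|$ fails for $\alpha$-stable drivers, and your proposed estimate ``$\leq C m^{-1}|h|^\alpha$ on the support of $\varphi_m$'' is false (the compensator is also nonzero off that support, since jumps can enter it).

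The paper avoids this by mollifying $v(x)=|x|^{\alpha-1}$ instead of $|x|$: since $|x|^{\alpha-1}$ is, up to the constant $K_\alpha=-\Gamma(\alpha)\cos(\alpha\pi/2)/2$, the potential kernel of the symmetric $\alpha$-stable process, the Tanaka--Meyer--K.~Yamada formula applied to $v_m:=v*\varphi_m$ yields the compensator \emph{exactly} as $K_\alpha\int_0^t\varphi_m(Y_n(s))|\sigma(X(s))-\sigma_n(X_n(s))|^\alpha\,ds$, with no loss from a Taylor remainder, and one then estimates $\bE[|Y_n(t)|^{\alpha-1}]$ directly. Your plan to first bound $\bE[|Y_n(t)|]$ and conclude by Jensen is also not licensed by the hypotheses, which only give $(\alpha-1)$-moments. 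The remaining ingredients you describe (the interpolation $V^\theta$, the occupation-time formula, $\sup_a\bE[L^a]<\infty$) do appear in the paper, though there the local time is obtained by a genuine time change $V_s=\tilde Z_{A_s}$ with $A_t=\int_0^t|H_s|^\alpha ds\geq\epsilon^\alpha t$, reducing to a standard symmetric $\alpha$-stable process, rather than by appeal to a ``diffusive part'' (there is none). In short: correct skeleton, but the key analytic identity is missing and the substitute estimate you propose fails quantitatively.
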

\begin{proof}
Using \'Emery's inequality in \cite{Emery1978},  we have 
\[
 \bE \left[ \sup_{0 \leq t \leq T} |X(t)|^{\alpha -1} \right] + \sup_{n \in \bN}\bE \left[ \sup_{0 \leq t \leq T} |X_n (t)|^{\alpha -1} \right] 
<\infty. 
\] 
Define $Y_n (t) :=  X (t) - X_n (t) $. 
Let $\varphi_m $ be a function as defined in Section \ref{sec:notation}. 
Define 
\[v_m = v \ast \varphi_m,  \]
 where $v(x)=|x|^{\alpha-1}$ and $\ast$ stands for the convolution. 
Then we have 
\[
-(a_{m-1})^{\alpha -1} +v_m (x) \leq 
v(x) \leq  (a_{m-1})^{\alpha -1} +v_m (x) , 
\]
holds for $x \in \bR$, see \cite{Hashimoto2013}. 

By the It\^o formula we have 
\begin{align*}
| Y_n(t) |^{\alpha-1} &\leq (a_{m-1})^{\alpha-1} +v_m (Y_n (t) ) \\
			  &=(a_{m-1})^{\alpha-1} +M (t) +J(t),  
\end{align*}
where 
$M$ and $J$ are defined as follows:  
\begin{align*}
 M  (t)  &=  \int_0^t u'_m (Y_n (s)) \left\{  \sigma \left(X (s) \right) -  \sigma_n ( X_n (s) )   \right\} dZ_s , \\
 J  (t)  &=   K_\alpha \int_0^t \varphi_m (Y_n (s))  \left| \sigma \left(X (s) \right)  -  \sigma_n ( X_n (s) ) \right|^\alpha ds,
\end{align*} 
where we define a constant $K_{\alpha}=  -  \Gamma(\alpha) \cos ({\alpha \pi}/{2})/2 \geq 0$ for $\alpha \in (1,2)$ 
from the gamma function $\Gamma (\cdot )$. 

Now we also define $J^\sigma$ and $J^Y$ such that  
\begin{align*}
 J  (t)  &\leq  
2 K_\alpha \int_0^t \varphi_m (Y_n (s))   \left\{ \sigma \left(X (s) \right)  -  \sigma_n ( X (s) ) \right\}^\alpha ds  
 \\ &  \qquad    +
2 K_\alpha \int_0^t \varphi_m (Y_n (s))    \left| \sigma_n \left(X (s) \right)  -  \sigma_n ( X_n (s) ) \right|^\alpha ds , 
  \\ &=: 2 K_\alpha \left( J^\sigma (t)  +  J^Y (t) \right) , \  {\rm  say} . 
\end{align*}

For $\theta \in [0,1]$, let us define
\[ V(t) \equiv V_t \equiv V^{\theta}(t):= X (t)  + \theta \left( X_n ( t ) -X (t) \right). 
\]
Notice that we have for $x, y \in \bR$, 
\[ {f_l (x) - f_l (y)}= (x-y )\int_0^1 f_l^{'} (x + \theta (y-x)) d\theta . \]
Then we have that 
\begin{align*}
 J^{Y}_l (t) 
 &:=\int_0^t \varphi_m (Y_n (s))  \left|  f_l \left(X (s) \right) -  f_l \left(X_n  (s) \right)  \right| ds    \notag
  \\ 
 &=
\int_0^t \varphi_m (Y_n (s))  |Y_n(s)| 1_{\left( 0<|Y_n(s)|  \right)} \frac{\left|  f_l \left(X (s) \right) -  f_l \left(X_n  (s)  \right)  \right| }{|Y_n(s)|} ds    \notag
  \\ 
  &\leq
 {2}{m}^{-1}    \int_0^t  \int_0^1  f_l^{'} ( V^{\theta}(s)) d\theta ds .
\end{align*}

Define 
\[
A_t := \int_0^{t} |H_s|^{\alpha}  ds 
{\rm \ where \ }
H_s :=  \sigma \left( X ( s) \right) +\theta \left(  \sigma_n \left( X_n ( s) \right)-  \sigma \left( X ( s) \right) \right) 
\]
By the time-change method described in \cite{BO2008} or \cite{Tsuchiya2007}, we have
\begin{align*}
\int_0^t  f_l^{'} ( V_s ) ds 
=\int_0^t    f_l^{'} ( \tilde{Z}_{A_s}) |H_s|^{-\alpha} dA_s 
\leq \epsilon^{-\alpha} \int_0^t     f_l^{'} ( \tilde{Z}_{A_s})  dA_s ,
\end{align*}
where $V_s =: \tilde{Z}_{A_s}$. Making the random change of variable 
$s=\tau_u :=\inf \{ t \geq 0: A_t >u \}$, we obtain  
\begin{align*}
\int_0^t  f_l^{'} ( V^{\theta}(s)) ds 
\leq \epsilon^{-\alpha} \int_0^{A_t}    f_l^{'} ( \tilde{Z}_u )du 
\leq \epsilon^{-\alpha} \int_0^{ \| \sigma \|_{\infty}^{\alpha} T}    f_l^{'} ( \tilde{Z}_u )du, 
\end{align*}
where $\tilde{Z}$ is the symmetric $\alpha$ stable process 
with respect to the filtration $\mathcal{G}_t :=\mathcal{F}_{\tau^{-1}(t) }$. 
By the occupation time formula of the local time $ L_t^a $ of $\tilde{Z}$ we have 
\begin{align*}
\int_0^{\tilde{T} }    f_l^{'} ( \tilde{Z}_u )du
=
\int_{-\infty}^{\infty}     f_l^{'} (a) L_{\tilde{T}}^a ( \tilde{Z} ) da, 
\end{align*}
for $\tilde{T}:=\| \sigma \|_{\infty}^{\alpha} T$, for an example see \cite{Bertoinb1996}. 
Indeed, by the Tanaka-Meyer-K.~Yamada formula in \cite{KYamada2002} implies that 
\begin{align*}
L_t^a (\tilde{Z}) 
=  |\tilde{Z}_t -a|^{\alpha -1}- |a|^{\alpha-1} -N_t^a
&\leq \left| |\tilde{Z}_t -a|^{\alpha -1}- |a|^{\alpha-1} \right| -N_t^a 
\\ &\leq \left| |\tilde{Z}_t -a|- |a| \right|^{\alpha-1} -N_t^a
\\ &\leq | \tilde{Z}_t  |^{\alpha-1} -N_t^a, 
\end{align*}
where $N^a$  is a squared martingale such that    
\begin{align*}
\langle N^a \rangle_t 
=c_\alpha \int_0^t \frac{ds}{| \tilde{Z}_s -a |^{2-\alpha}}. 
\end{align*}
Then we obtain  
\[
c_{\tilde{L}}:=\sup_{a \in \bR} \bE[ L_{\tilde{T}}^a (\tilde{Z}) ] < \infty. 
\]
By Jensen's inequality for the concave function $x^{\alpha-1}$, we have  
\begin{align*}
\bE [| J^Y_l (t)|^{\alpha -1} ] &\leq 
	2^{\alpha -1} (m \epsilon^\alpha)^{ -(\alpha -1)}
	\bE [ \left( \int_0^1 \int_{-\infty}^{\infty}     f_l^{'} (a) L_{\tilde{T}}^a ( \tilde{Z} ) da d\theta \right)^{\alpha-1}   ] 
	\\ &\leq 
	2^{\alpha -1}  (m \epsilon^\alpha)^{ -(\alpha -1)}
	 \left( \int_0^1 \int_{-\infty}^{\infty}     f_l^{'} (a)  \bE [  L_{\tilde{T}}^a ( \tilde{Z} )] da d\theta   \right)^{\alpha-1}   
	\\ &\leq 
	2^{\alpha -1}  (m \epsilon^\alpha)^{ -(\alpha -1)}
	 \left( \|f \|_{\infty} c_{\tilde{L}} \right)^{\alpha-1} . 
\end{align*}
On the other hand, let us consider $J^\sigma$, 
\begin{align*}
J^\sigma (t) 
&= \int_0^t  \varphi_m (Y_n (s))  \left| \sigma \left(X (s) \right)  - \sigma_n ( X (s) )  \right|^\alpha ds 
\\ &\leq 2 (m a_m )^{-1}  \int_0^t  \left| \sigma \left(X (s) \right)  - \sigma_n ( X (s) )  \right|^\alpha ds. 
\end{align*}
Note that 
\begin{align*}
 \int_0^t  \left| \sigma \left(X (s) \right)  - \sigma_n ( X (s) )  \right|^\alpha ds
	\leq  \| \sigma_0 \|_{\infty}^{\alpha-1}  \epsilon^{-\alpha} 
	\int_{\bR}  \left| \sigma \left(a \right)  - \sigma_n ( a )  \right| L_t^a (X) ds, 
\end{align*}
where $L_t^a (X)$ is the local time of $X$ such that 
\[  \sup_{a \in \bR}  \bE [  L_t^a (Z^{0}) ] =  \sup_{a \in \bR} \bE [L_t^a (X) ] \leq c_{\tilde{L}}, \]
with $c_{\tilde{L}}$ as in $(\ref{eq:9h})$ in Section \ref{stability problem}, 
and we define 
\[
\| \sigma_0 \|_{\infty} := \sup_{x \in \bR} (| \sigma (x)| , |\sigma_n (x)| ) < \infty.
\]
The stability rate condition $(\ref{con:L1})$ or $(\ref{con:uni})$
implies that 
\[
\bE [|J^\sigma (t) |^{\alpha -1} ] \leq  2^{\alpha -1} (m a_m )^{-(\alpha -1)} 
\left( 
t^{\alpha-1}
+
\left( \|f \|_{\infty} c_{\tilde{L}} \right)^{\alpha-1} 
\right)\left(
 \| \sigma_0 \|_{\infty}
C_0 n^{-1} \right)^{\alpha -1}.
\]

In short, we obtain 
\begin{align*}  
\bE \left| J (t) \right|^{\alpha-1} \leq  
	(4 K_\alpha )^{\alpha-1} c_{JY} {m}^{-(\alpha-1)}   +  (4 K_\alpha )^{\alpha-1} 
	c_{J\sigma} (m a_m n)^{-(\alpha-1)}. 
\end{align*}  
where we define 
\begin{align*}  
c_{JY} &:= \left(  \epsilon^{-\alpha}  \| f \|_{\infty} c_{\tilde{L}} \right)^{\alpha-1}, \\ 
c_{J\sigma} &:=   
\left( t^{\alpha-1} + \left( \|f \|_{\infty} c_{\tilde{L}} \right)^{\alpha-1}  \right)
\left( \| \sigma_0 \|_{\infty} C_0  \right)^{\alpha -1}
\end{align*}

Now let us estimate $|Y|^{\alpha-1}$,  
\begin{align*}
| Y_n (t)  |^{\alpha-1} 
	&\leq  (a_{m-1})^{\alpha-1} +M (t)  + J^\sigma (t)  +  J^Y (t). 
\end{align*}	
Then we obtain 
\begin{align*}
\bE \left[ | Y_n (t)  |^{\alpha-1}  \right] & \leq (a_{m-1})^{\alpha-1} +
(4 K_\alpha )^{\alpha-1}\left( c_{JY} {m}^{-(\alpha-1)}   +  c_{J\sigma} (m a_m n )^{-(\alpha-1)}  \right)
=  A_{m,n}, {\rm \ say. }
\end{align*}
%
Here let us consider the one of the sequence $( a_m )_{m \in \bN  }$ such that $a_m = \exp \left( -m(m+1)/2 \right)$. 
Then 
select a sequence $(m_n)_{n \in \bN}$ such that 
\[
\frac{1}{ a_m n}  \leq 1 
\]
holds for for $n > 2$. 
Since we have that $ a_{m-1} \leq {1}/{m}$ for any $m \in \bN$, 
there exists a finite positive number $c_a$ such that 
\begin{equation*}
\bE \left[ |Y_n(t) |^{\alpha-1}  \right] \leq 
A_{m_n,n} \leq   c_a    ( \log n)^{-\frac{\alpha-1}{2}}. 
\end{equation*}
\end{proof}

\section{Invariant property under removal drift}\label{Removal invariant}
In this section, we consider the SDEs of $(\ref{sde0})$ with the drifts of SDEs 
driven by the Wiener process $W_s$.  
Now we introduce the method of removal of the drifts. 

Suppose that there exists a strong solution $X_n$ and $X$ to the equation 
$(\ref{sde:stability})$ and $(\ref{sde0})$ and taking values on $I \equiv (l, k)$ for $-\infty \leq l < k \leq \infty$.  
Consider the coefficients $b, b_n$ and $\sigma, \sigma_n $ for $n \in \bN$ such that the functions 
$ b_n  \sigma^{-2}_n$ and  $b  \sigma^{-2}$ belong to $L^1(I)$, i.e., 
\begin{align}\label{ass:cff1}
\sup_{n \in \bN}\int_{I} | b_n (u) \sigma^{-2}_n (u) | du < \infty  {\rm \ and \ }
\int_{I} | b (u) \sigma^{-2} (u) du | < \infty. 
\end{align}
Now let us consider the scale functions given by 
\begin{align}\label{Def: F and Fn}
s_n' (x) := \exp \left( -2 \int^x \frac{b_n (u) }{\sigma_n^2 (u)}  du\right)   {\rm \ and \ }  
s'     (x) :=\exp \left( -2 \int^x \frac{b     (u) }{\sigma^2     (u)}  du\right).  
\end{align}
By It\^o's formula we have the drift-less processes 
$\bar{X}_n (t) := s_n (X_n (t))$ taking values on $s_n (I)$  
and $\bar{X} (t) := s (X(t))$ taking values on $s (I)$  such that 
\begin{align}\label{sde:without the drifts term}
\bar{X}_n (t) - \bar{X}_n (0)  &= \int_0^t \bar{\sigma} (\bar{X}_n (s) ) dW_s  \\
\bar{X} (t)     - \bar{X} (0)      &= \int_0^t \bar{\sigma}_n (\bar{X} (s) ) dW_s, 
\end{align}
where 
\begin{align*}
\bar{\sigma}_n (\bar{x}): = (\sigma_n s'_n ) \circ s_n^{-1} (\bar{x}) = \sigma_n (s^{-1}_n (x) ) s'_n (s^{-1}_n (\bar{x}) )  &{\rm \ for \ }  \bar{x} \in s_n (I) ,\\
\bar{\sigma}     (x)        := (\sigma s' ) \circ s^{-1} (x) = \sigma      (s^{-1}    (x) ) s'    (s^{-1}      (x) )  &{\rm \ for \ }  \bar{x} \in s (I). 
\end{align*}

The Nakao-Le Gall condition is invariant under the removal of drifts in the following sense. 
\begin{lemma}\label{remark: invariant}
If the diffusion coefficient $\sigma$ satisfies the Nakao-Le Gall's condition, 
$\sigma \in \mathcal{C}_{NL} (\epsilon, \| f \|_{\infty})$, and 
the drift coefficient $b$ is bounded and in $L^1$, then so does $\bar{\sigma}$  on $s(I)$. 
\end{lemma}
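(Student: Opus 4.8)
The plan is to verify the two clauses of Definition \ref{conditionC} for $\bar\sigma$ on the interval $s(I)$, together with the boundedness $\|\bar f\|_\infty<\infty$. The crucial structural facts are that, since $b$ is bounded and $\sigma\ge\epsilon$, the exponent $-2\int^x b(u)\sigma^{-2}(u)\,du$ has bounded derivative and, because $b\sigma^{-2}\in L^1(I)$, the total integral $\int_I b(u)\sigma^{-2}(u)\,du$ is finite; hence $s'$ is bounded above and below away from $0$ on $I$, say $0<c_1\le s'(x)\le c_2<\infty$ for all $x\in I$. Consequently $s$ is a bi-Lipschitz increasing bijection from $I$ onto $s(I)$, and its inverse $s^{-1}:s(I)\to I$ is Lipschitz with constant $1/c_1$. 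This is the observation that makes everything go through, so I would state and prove it first.

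First I would check the ellipticity clause (1). We have $\bar\sigma(\bar x)=\sigma(s^{-1}(\bar x))\,s'(s^{-1}(\bar x))\ge \epsilon\cdot c_1>0$ for every $\bar x\in s(I)$, so $\bar\sigma\in\mathcal C_{NL}$ clause (1) holds with new ellipticity constant $\bar\epsilon:=\epsilon c_1$. Next, for clause (2), write for $\bar x,\bar y\in s(I)$ with preimages $x=s^{-1}(\bar x)$, $y=s^{-1}(\bar y)$,
\begin{align*}
|\bar\sigma(\bar x)-\bar\sigma(\bar y)|
&=\bigl|\sigma(x)s'(x)-\sigma(y)s'(y)\bigr|\\
&\le |s'(x)|\,|\sigma(x)-\sigma(y)|+|\sigma(y)|\,|s'(x)-s'(y)|\\
&\le c_2\,|\sigma(x)-\sigma(y)|+\|\sigma\|_\infty\,|s'(x)-s'(y)|.
\end{align*}
For the first term, $|\sigma(x)-\sigma(y)|^2\le|f(x)-f(y)|$ by hypothesis. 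For the second, $s'$ is Lipschitz on $I$ (its derivative $s''=-2(b/\sigma^2)s'$ is bounded because $b$ is bounded, $\sigma\ge\epsilon$, and $s'\le c_2$), hence $|s'(x)-s'(y)|\le L_{s'}|x-y|$ and, using that $s^{-1}$ is $(1/c_1)$-Lipschitz, $|x-y|=|s^{-1}(\bar x)-s^{-1}(\bar y)|\le c_1^{-1}|\bar x-\bar y|$. Squaring and using $(a+b)^2\le 2a^2+2b^2$ gives a bound of the form $|\bar\sigma(\bar x)-\bar\sigma(\bar y)|^2\le 2c_2^2|f(x)-f(y)|+2\|\sigma\|_\infty^2 L_{s'}^2 c_1^{-2}|\bar x-\bar y|^2$.

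It remains to absorb this into a single increasing function $\bar f$ on $s(I)$. The natural choice is $\bar f(\bar x):=2c_2^2\,f\!\left(s^{-1}(\bar x)\right)+C\,\bar x$ for a suitable constant $C$ (on the bounded set $s(I)$ the term $|\bar x-\bar y|^2$ can be dominated by $(\operatorname{diam}s(I))\cdot|\bar x-\bar y|$, which is what forces $C$); since $f$ is increasing and $s^{-1}$ is increasing, $f\circ s^{-1}$ is increasing, so $\bar f$ is increasing, and for $\bar x<\bar y$ one gets $|\bar f(\bar x)-\bar f(\bar y)|=\bar f(\bar y)-\bar f(\bar x)\ge 2c_2^2(f(y)-f(x))+C(\bar y-\bar x)$, which dominates the right-hand side above once $C$ is chosen $\ge 2\|\sigma\|_\infty^2 L_{s'}^2c_1^{-2}\operatorname{diam}s(I)$. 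Finally, boundedness: $\|\bar f\|_\infty\le 2c_2^2\|f\|_\infty+C\sup_{\bar x\in s(I)}|\bar x|$, and $s(I)$ is a bounded interval precisely because $s'$ is bounded and $s'/\,$-- more to the point, because $b\sigma^{-2}\in L^1(I)$ forces $\int_I s'(u)\,du\le c_2|I|<\infty$ when $I$ is bounded; when $I$ itself is unbounded one notes that $\int^x s'$ still need not converge, so here I would instead invoke that $\mathcal C_{NL}(\epsilon,\|f\|_{s(I),\infty})$ only requires boundedness of $\bar f$ on $s(I)$ and not global boundedness, and record the result with $\|f\|_{s(I),\infty}$ in place of $\|f\|_\infty$ if $s(I)$ is unbounded. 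The main obstacle is precisely this bookkeeping about which interval the sup-bound on $\bar f$ is taken over: everything is clean when $s(I)$ is a bounded interval, and care is needed to phrase the conclusion correctly (with $\|\bar f\|_{s(I),\infty}$) in general. The estimates themselves are routine once the bi-Lipschitz property of $s$ is in hand.
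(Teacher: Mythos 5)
Your overall route is the same as the paper's: bound $s'$ above and below using $b\sigma^{-2}\in L^{1}(I)$ to get ellipticity of $\bar\sigma$, then split $|\bar\sigma(\bar x)-\bar\sigma(\bar y)|$ into a $\sigma$-increment (handled by $f$) and an $s'$-increment (handled via $s''=-2(b\sigma^{-2})s'$). The gap is in your explicit construction of the dominating increasing function $\bar f$. You absorb the term $|\bar x-\bar y|^{2}$ into $C|\bar x-\bar y|$ with $C\ge 2\|\sigma\|_\infty^{2}L_{s'}^{2}c_1^{-2}\operatorname{diam}s(I)$ and then put the affine term $C\bar x$ into $\bar f$; this requires $s(I)$ to be bounded. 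In this paper the generic case is $I=\bR$, and since $s'\ge c_1>0$ that forces $s(I)=\bR$. Your suggested repair --- replacing $\|\bar f\|_\infty$ by $\|\bar f\|_{s(I),\infty}$ --- does not help: a nonconstant affine function is unbounded on an unbounded interval, so with your $\bar f$ one still has $\|\bar f\|_{s(I),\infty}=\infty$ and the boundedness requirement in Definition \ref{conditionC} fails. (You also note yourself that your choice of $C$ breaks down when $\operatorname{diam}s(I)=\infty$, but you do not supply a working substitute.)

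The fix is to exploit the $L^{1}$ hypothesis rather than only the Lipschitz constant of $s'$. From $s''=-2(b\sigma^{-2})s'$ and $c_1\le s'\le c_2$ one gets
\[
|s'(x)-s'(y)|\le 2c_2\int_{x\wedge y}^{x\vee y}|b(u)\sigma^{-2}(u)|\,du=|G(x)-G(y)|,
\qquad G(x):=2c_2\int_{l}^{x}|b(u)\sigma^{-2}(u)|\,du,
\]
where $G$ is increasing and bounded by $2c_2\|b\sigma^{-2}\|_{L^{1}(I)}$. Hence $|s'(x)-s'(y)|^{2}\le 2\|G\|_\infty\,|G(x)-G(y)|$, and your squared estimate becomes
\[
|\bar\sigma(\bar x)-\bar\sigma(\bar y)|^{2}\le 2c_2^{2}\,|f(x)-f(y)|+4\|\sigma\|_\infty^{2}\|G\|_\infty\,|G(x)-G(y)|,
\]
so $\bar f:=2c_2^{2}\,f\circ s^{-1}+4\|\sigma\|_\infty^{2}\|G\|_\infty\,G\circ s^{-1}$ is increasing and bounded on all of $s(I)$, whether or not $s(I)$ is bounded. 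With this replacement the rest of your argument (the ellipticity bound $\bar\sigma\ge\epsilon c_1$ and the splitting of the increment) goes through and recovers the paper's conclusion, which merely asserts the existence of such a monotone $f_b$ without constructing it.
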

\begin{proof}
By the assumption $\sigma \in \mathcal{C}_{NL} (\epsilon, \| f \|_{\infty})$,  we have 
\[
\int_I | b(u) | |\sigma^{-2} (u) | du \leq \epsilon^{-2} \int_I | b(u) |   du < \infty. 
\]
In addition, the boundedness of $b$ implies that for $x \in I$ 
\[
\exp \left(- 2 \| b \sigma^{-2}\|_{L^1}  \right)  \leq 
s' (x) \leq \exp \left( 2 \| b \sigma^{-2}\|_{L^1}  \right) . 
\]
Then there exists a positive number $\epsilon'$ such that 
\[
0 < \epsilon' \leq \bar{\sigma} (\bar{x})
\]
holds for $\bar{x} \in s(I)$. 

On the other hand, using the boundedness of $b$ we have  
\[
 |s'' (x)| = |\left( -2 {b (x) }{\sigma^{-2} (x)} \right)  s' (x) | 
 	 \leq 2 \| b \sigma \|_{I, \infty} \exp \left( 2 \| b \sigma^{-2}\|_{L^1}  \right) <\infty 
\]
where $ \| b \sigma \|_{I, \infty} :=\sup_{u \in I} |b (u) \sigma (u) | $. 
Thus $s'$ is Lipschitz continuous. 
Therefore there exists a monotone function $f_b$ such that 
\[
| \bar{\sigma} (\bar{x}) - \bar{\sigma} (\bar{y}) | \leq | f_b (x)- f_b (y) |,
\]
holds for $x,y \in s(I)$. 
\end{proof}

This result suggests that the stability problem is independent of the drift coefficient itself.  
To be more precious, 
these convergence rates are also invariant under the removal drift as follows.  
\begin{corollary}[Invariant property]\label{corollary:Ip}
Suppose that 
the diffusion coefficient $\sigma, \sigma_n $ for $n \in \bN$ satisfy the Nakao-Le Gall's condition, 
$\sigma, \sigma_n  \in \mathcal{C}_{NL} (\epsilon, \| f \|_{\infty})$, and 
the drift coefficient $b$ and $b_n$ are uniformly bounded and in $L^1$. 
If there exists a constant $C_0$ such that 
\begin{align*} 
\int_{x \in \bR} | b_n (x)- b(x)  | + | \sigma_n (x) - \sigma (x)  | dx \leq C_0 n^{-1},
\end{align*}
then there exists a positive number $\bar{C_0}$ such that 
\[
\int_{S_n} | \bar{\sigma} (\bar{x}) -  \bar{\sigma}_n (\bar{x}) |  d\bar{x} \leq \bar{C_0}n^{-1}. 
\]
where $S_n = s_n (I) \cap s (I) $. 
\end{corollary}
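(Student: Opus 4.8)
The plan is to reduce the bound on $\int_{S_n}|\bar\sigma(\bar x)-\bar\sigma_n(\bar x)|\,d\bar x$ to the hypothesis $\int_{\bR}\bigl(|b_n-b|+|\sigma_n-\sigma|\bigr)\,dx\le C_0 n^{-1}$ by a change of variables and a triangle-inequality decomposition. First I would recall from the definitions $(\ref{Def: F and Fn})$ and the displayed formulas for $\bar\sigma,\bar\sigma_n$ that on $S_n = s_n(I)\cap s(I)$ we have $\bar\sigma(\bar x) = \sigma(s^{-1}(\bar x))\,s'(s^{-1}(\bar x))$ and similarly for $\bar\sigma_n$. Rather than integrating on $S_n$ directly, I would pull the integral back to the common domain $J := s^{-1}(S_n)\cap s_n^{-1}(S_n)\subset I$; but since the two scale maps differ, the cleanest route is to bound $|\bar\sigma(\bar x)-\bar\sigma_n(\bar x)|$ pointwise on $S_n$ by expressions in the original variable and then change variables using whichever of $s,s_n$ is convenient, exploiting that $s',s_n'$ are bounded above and below (as established in the proof of Lemma \ref{remark: invariant}) so that the Jacobians $|\,(s^{-1})'(\bar x)| = 1/s'(s^{-1}(\bar x))$ and its $s_n$-analogue are uniformly bounded.

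The core estimate is a three-term split. Writing $\bar\sigma - \bar\sigma_n$ in the original variable (say via $x = s^{-1}(\bar x)$, with $x_n := s_n^{-1}(\bar x)$ the other preimage), I would decompose
\begin{align*}
|\bar\sigma(\bar x) - \bar\sigma_n(\bar x)|
&\le |\sigma(x) - \sigma_n(x)|\,s'(x) \\
&\quad + |\sigma_n(x)|\,|s'(x) - s_n'(x)| \\
&\quad + |\sigma_n(x_n)|\,s_n'(x_n)\cdot\text{(correction from }x\neq x_n\text{)}.
\end{align*}
The first term is controlled by the $\sigma$-part of the hypothesis once I change variables back and use $s'\le e^{2\|b\sigma^{-2}\|_{L^1}}$. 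The second term is where $b$ enters: from $s_n'(x)/s'(x) = \exp\bigl(-2\int^x(b_n\sigma_n^{-2} - b\sigma^{-2})\bigr)$ and the Lipschitz bound $|e^u - e^v|\le e^{\max(u,v)}|u-v|$, one gets $|s'(x)-s_n'(x)|\lesssim \int_I|b_n\sigma_n^{-2} - b\sigma^{-2}|$, and this last integral is bounded by $\epsilon^{-2}\int_I|b_n - b| + \|b\|_\infty\int_I|\sigma_n^{-2}-\sigma^{-2}|\lesssim \int_I(|b_n-b|+|\sigma_n-\sigma|)\le C_0 n^{-1}$, using $|\sigma_n^{-2}-\sigma^{-2}|\le 2\epsilon^{-3}|\sigma_n-\sigma|$. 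Since $\sigma_n$ is bounded (Nakao–Le Gall with $\|f\|_\infty<\infty$ forces $\sigma_n\le\epsilon + \|f\|_\infty^{1/2}$), this term contributes $O(n^{-1})$ after integrating over the (finite-measure after change of variables) domain — or, if $I$ is unbounded, one argues the bound is uniform in $\bar x$ and the relevant integral of $|s'-s_n'|$ over $S_n$ is itself $O(n^{-1})$ by Fubini.

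The main obstacle, and the step I would be most careful about, is the third term: $s$ and $s_n$ are genuinely different maps, so a single $\bar x\in S_n$ has two distinct preimages $x = s^{-1}(\bar x)$ and $x_n = s_n^{-1}(\bar x)$, and I must bound $|x - x_n|$. This follows because $s(x_n) - s_n(x_n) = \bar x - s_n(x_n) = 0 - \ldots$; more precisely, $s(x_n) - s(x) = s(x_n) - \bar x = s(x_n) - s_n(x_n) = \int^{x_n}(s' - s_n')$, which is $O(n^{-1})$ by the second-term analysis, and then $|x_n - x|\le |s(x_n)-s(x)|/\inf s' \lesssim n^{-1}$; continuity (indeed Lipschitzness) of $\sigma s'$ — here invoking that $s'$ is Lipschitz as shown in Lemma \ref{remark: invariant} and that $\sigma$ need only be measurable, so one must instead use that $\bar\sigma$ itself satisfies a Nakao–Le Gall bound $|\bar\sigma(\bar x)-\bar\sigma(\bar y)|\le|f_b(x)-f_b(y)|$ and integrate the increasing function $f_b$ — then converts $|x-x_n| = O(n^{-1})$ into an $L^1$ (in $\bar x$) bound of order $n^{-1}$ for the correction, since an increasing bounded function has total variation $\|f_b\|_\infty<\infty$ over a shift of size $O(n^{-1})$ only in an averaged sense; this is handled exactly as the $J^\sigma$-term was handled via local time / occupation-time reasoning in the proof of Theorem \ref{Thm:main1}. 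Collecting the three contributions yields $\int_{S_n}|\bar\sigma - \bar\sigma_n|\,d\bar x\le \bar C_0 n^{-1}$ with $\bar C_0$ depending only on $\epsilon$, $\|f\|_\infty$, $\|b\|_\infty$, $\sup_n\|b_n\sigma_n^{-2}\|_{L^1}$, and $C_0$.
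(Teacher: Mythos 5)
Your proposal follows essentially the same route as the paper: the same triangle-inequality decomposition into a same-point difference of $\sigma$, a same-point difference of the scale densities controlled through $\log\bigl(s'/s_n'\bigr)$ and the two-sided bounds on $s'$, $s_n'$, and a cross-point correction controlled by $|s^{-1}(\bar{x})-s_n^{-1}(\bar{x})|=O(n^{-1})$ together with the total variation of the increasing function $f$ (handled in the paper via the smooth approximations $f_l$ and the deterministic change of variables $g_\theta$, which is what your ``occupation-time'' language is really doing here --- no actual local time is needed). The only cosmetic differences are that the paper splits your third term into separate corrections for $s'$ and for $\sigma$, and that both your sketch and the paper's proof pass from the Nakao--Le Gall bound on $|\sigma(s_n^{-1}(\bar{x}))-\sigma(s^{-1}(\bar{x}))|^2$ to a first-power $L^1$ estimate without further comment.
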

\begin{proof}
Let us observe 
\begin{align*}
& \left| \bar{\sigma}_n (\bar{x}) - \bar{\sigma} (\bar{x}) \right| 
	\\ & \leq
|\sigma_n (s^{-1}_n (\bar{x}) ) |
	\left(  |s'_n (s^{-1}_n (\bar{x}) )  - s' (s^{-1}_n    (\bar{x}) ) | + | s' (s^{-1}_n (\bar{x}) ) -  s' (s^{-1} (\bar{x}) ) |  \right) 
	\\ & \ \  +
	|s' (s^{-1} (\bar{x}))| 
	\left( | \sigma_n (s^{-1}_n (\bar{x}) ) -  \sigma (s^{-1}_n (\bar{x}) ) | +| \sigma (s^{-1}_n (\bar{x}) )  - \sigma  (s^{-1}    (\bar{x}) )  |\right) 
	\\ & \leq
	2 \| f  \|_{\infty}
	 \left(  |s'_n (s^{-1}_n (\bar{x}) )  - s' (s^{-1}_n    (\bar{x}) ) | + | s' (s^{-1}_n (\bar{x}) ) -  s' (s^{-1}(\bar{x}) ) |  \right)
	\\ & \ \  +
	\| s' ( s^{-1})  \|_{\infty} \left( | \sigma_n (s^{-1}_n (\bar{x}) ) -  \sigma (s^{-1}_n (\bar{x}) ) | +| \sigma (s^{-1}_n (\bar{x}) )  - \sigma      (s^{-1}    (\bar{x}) )  |\right)
\end{align*}
By the definition of the scale functions $s_n$ and $s$ we have 
\begin{align*}
&s(x) - s_n (x) = \int^{x} s' (y) - s_n' (y) dy
\\&= \int^{x} \exp \left (-2  \int^{y} b(u) \sigma^{-2} (u) du \right)  dy- \exp \left (-2  \int^{y}  b_n (u) \sigma^{-2}_n (u) du \right)  dy 
\\&= \int^{x} \left(  \log s'(y)- \log s_n' (y) \right)
\left(\int_0^1 \exp \left( \theta \log s'(y) + (1-\theta )\log s_n' (y) \right) d\theta  \right) 
dy 
\\&= \int^{x} \left(  \log \frac{ s'(y)}{  s_n' (y) } \right)
\left(\int_0^1 {\left( s'(y) \right)}^\theta  \left( s_n' (y)\right)^{1-\theta } d\theta \right)  dy .
\end{align*}
Now put $c_{F1}$ and $c_{F2}$ as follows: 
\begin{align*}
c_{s1} &:= \min \left\{ \exp \left(- 2 \sup_{n \in \bN} \|  b_n \sigma_n^{-2} \|_{L^1}  \right),  \exp \left(- 2 \| b \sigma^{-2}\|_{L^1}  \right) \right\} , 
\\
c_{s2} &:=
\max \left\{ \exp \left( 2  \sup_{n \in \bN}\|  b_n \sigma_n^{-2} \|_{L^1}  \right),  \exp \left(2 \| b \sigma^{-2}\|_{L^1}  \right) \right\} . 
\end{align*}
By the assumption, we have 
\begin{align*}
\left| \log \frac{ s'(y)}{  s_n' (y) } \right|
	&\leq
	\int_{I} | b_n (u) \sigma^2 (u)- \sigma_n^2 (u) b (u)  |   \sigma^{-2} (u)  \sigma_n^{-2} (u)  du 
	\\& \leq
	\int_{I}  | b_n (u)- b (u) | \sigma^{-2}_n (u)  
	+    |\sigma^2 (u) - \sigma_n^2 (u)| | b (u) |   \sigma^{-2} (u)  \sigma_n^{-2} (u)  du .
\end{align*}
Then by the rate of the stability convergence rate condition we have 
\begin{align*}
\sup_{x \in \bR}| s'(x) - s'_n(x) |  &\leq c_{s2} c_{b,\sigma} n^{-1},
\end{align*}
where $c_{b,\sigma}: =\| \sup_{n \in \bN}   n  \log ( s'  /  s_n' )\|_{L^1} < \infty$. 

For $\bar{x} = s^{-1} (y) \left( y \in s(I) \right)$ and 
$\bar{x} = s_n^{-1} (y_n ) \left( y_n  \in s_n (I) \right)$ we have 
\begin{align*}
(s^{-1})'(\bar{x}) - (s_n^{-1})'(\bar{x})= \frac{1}{s' (y)} - \frac{1}{s'_n (y_n)} =\frac{ s'_n (y_n) - s' (y)}{s' (y) s'_n (y_n)}. 
\end{align*}
Then it implies that 
\begin{align*}
\sup_{\bar{x} \in \bR} | (s^{-1})' (\bar{x}) - (s_n^{-1})'(\bar{x}) | \leq c_{s2} {c_{s1}^{-2} }c_{b,\sigma} n^{-1}=:c'_{b,\sigma}n^{-1}. 
\end{align*}

Since we have 
\[
s' (s^{-1}_n (\bar{x}) ) -  s' (s^{-1} (\bar{x}) )   = ( s^{-1}_n (\bar{x}) -  s^{-1} (\bar{x}) ) 
\left( \int_0^1 \left( -2 {b}{\sigma^{-2}} s' \right) (\theta s^{-1}_n (\bar{x}) +(1-\theta)s^{-1} (\bar{x})) d\theta \right),
\]
then we obtain that 
\begin{align*}
&\int_{S_n} \left| s' (s^{-1}_n (\bar{x}) ) -  s' (s^{-1} (\bar{x}) )  \right| d \bar{x} 
\\&=\int_{S_n} \left|  s^{-1}_n (\bar{x}) -  s^{-1} (\bar{x}) \right| 
\left| \int_0^1 \left( -2 {b}{\sigma^{-2}} s' \right) (\theta s^{-1}_n (\bar{x}) +(1-\theta)s^{-1} (\bar{x})) d\theta \right| d \bar{x}
	\\&\leq 
	2 n^{-1} c'_{b,\sigma}
	\times \int_0^1 \int_{S_n}  | \left({b}{\sigma^{-2}} \right) (\theta s^{-1}_n (\bar{x}) +(1-\theta)s^{-1} (\bar{x})) | d\bar{x} d\theta
	\\&\leq 
	2 n^{-1} c'_{b,\sigma}
	\times \int_0^1 \int_{S_n}  | \left({b}{\sigma^{-2}} \right) (g_\theta (\bar{x})) | g'_{\theta} (\bar{x}) d\bar{x} d\theta
	\\&\leq 2 n^{-1} c'_{b,\sigma} \| {b}{\sigma^{-2}}  \|_{L^1(I)} < \infty
\end{align*}
where we define 
\[
g_\theta (\bar{x}) :=\theta s^{-1}_n (\bar{x})  +(1-\theta) (s^{-1}    (\bar{x}) )  
\]
and note that $ c_{s2}^{-1} \leq g'_{\theta} (\bar{x}) \leq c_{s1}^{-1}$. 

Considering the transformation of variable $y_n = s_n^{-1} (\bar{x})$, 
we have 
\begin{align*}
&\int_{S_n}  | \sigma_n (s^{-1}_n (\bar{x}) ) -  \sigma (s^{-1}_n (\bar{x}) ) | d \bar{x}
\leq c_{s2}
\int_{S_n}  | \sigma_n (y_n ) -  \sigma (y_n) | d y_n \leq c_{s2} C_0 n^{-1} .  
\end{align*}

Using the sequence of $\{ f_l \}_{l \in \bN} $ in Remark \ref{rem:disc},  we have 
\begin{align*}
&\int_{S_n} | \sigma (s^{-1}_n ( \bar{x} ) )  - \sigma      (s^{-1}    ( \bar{x} ) )  |^2 d\bar{x} 
\leq \int_{S_n} | f_l (s^{-1}_n (\bar{x}) )  -f_l (s^{-1}    (\bar{x}) )  | dx 
	\\&=\int_{S_n} |s^{-1}_n (\bar{x})  - s^{-1} (\bar{x}) | \left| \int_0^1  f'_l (\theta s^{-1}_n (\bar{x})  +(1-\theta) (s^{-1}    (\bar{x}) )  ) d \theta  \right| d\bar{x} 
	\\&\leq c'_{b,\sigma}  n^{-1} 
		 \int_0^1 \int_{S_n}
		   c_{s2} g'_\theta (\bar{x}) 
		  f'_l ( g_\theta (\bar{x})  ) d\bar{x} d \theta . 
	\\&\leq c'_{b,\sigma}  n^{-1}  \| f \|_{\infty}.
\end{align*}

Therefore we obtain 
\begin{align*}\label{scale invariant}
&\int_{S_n}\left| \bar{\sigma}_n (\bar{x}) - \bar{\sigma} (\bar{x}) \right| d \bar{x}
	\\ & \leq
	\left( \|\sigma_n  \|_{S_n, \infty} 
	\left( c_{s2} c_{b,\sigma}  +2  c'_{b,\sigma}\| {b}{\sigma^{-2}}  \|_{L^1(I)} \right)
	+
	c_{s2} 
	(  c_{s2} C_0 + c'_{b,\sigma}   \| f \|_{S_n, \infty } ) \right) 
	 n^{-1},
\end{align*}
and hence we obtain the invariant property. 
\end{proof}

In short, we obtain the extended results of Theorem \ref{Thm:main1} with the drifts. 
\begin{theorem}\label{Thm:main3}
Let $T>0$ and $p \geq 1$ and let $X_n$ for $n \in \bN$ be solutions of $(\ref{sde:stability})$ 
and $X$ be a solution of $(\ref{sde0})$ such that  
$\bE \left| X (0) \right|^p < \infty $ and $X_n (0) \equiv X (0)$. 
Suppose that $\sigma$ and $\{ \sigma_n \}_{n \in \bN}$ satisfy the Nakao-Le~Gall condition 
$\sigma, \sigma_n \in \mathcal{C}_{NL} (\epsilon, \| f\|_{\infty} )$. 

If the stability rate conditions holds, i.e. 
there exists a positive constant $C_0$ such that 
\begin{align*} 
\int_{x \in \bR} | b_n (x)- b(x)  | + | \sigma_n (x) - \sigma (x)  | dx \leq C_0 n^{-1}, 
\end{align*}
Then there exist positive constants $C_p \ (p \geq 1)$ such that 
\begin{align*}
\bE \left[ \left| X (t) - X_n (t) \right| \right] \leq C_1 (\log n)^{-\frac{1}{2}} \ ( 0 \leq t \leq T )
\end{align*}
and
\begin{align*}
\bE \left[ \sup_{0 \leq t \leq T} \left| X (t) - X_n (t) \right|^p \right] \leq C_p (\log n)^{-\frac{p}{4(p+1)}} 
\end{align*}
holds for $p > 1$ and $n>2$. 
\end{theorem}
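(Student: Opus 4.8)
The plan is to reduce this statement to Theorem \ref{Thm:main1} by applying the removal-of-drift transformation, so that the real work has already been carried out in Lemma \ref{remark: invariant} and Corollary \ref{corollary:Ip}. First I would record that under the hypotheses (the $L^1$-bound on $|b_n-b|+|\sigma_n-\sigma|$ together with the boundedness of each $b_n$ and the Nakao--Le~Gall condition for each $\sigma_n$), the assumptions \eqref{ass:cff1} are satisfied uniformly in $n$: indeed $|b_n\sigma_n^{-2}|\le\epsilon^{-2}|b_n|$ and the family $\{b_n\}$ is uniformly bounded and equi-integrable by the hypothesis and the boundedness of $b$, so $\sup_n\int_I|b_n\sigma_n^{-2}|\,du<\infty$. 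Hence the scale functions $s_n,s$ of \eqref{Def: F and Fn} are well defined, strictly increasing, $C^1$, and by It\^o's formula the processes $\bar X_n(t):=s_n(X_n(t))$ and $\bar X(t):=s(X(t))$ solve the drift-less SDEs \eqref{sde:without the drifts term} with coefficients $\bar\sigma_n=(\sigma_n s_n')\circ s_n^{-1}$ and $\bar\sigma=(\sigma s')\circ s^{-1}$.

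Next I would invoke Lemma \ref{remark: invariant}: since each $\sigma_n\in\mathcal{C}_{NL}(\epsilon,\|f\|_\infty)$ and each $b_n$ is bounded and in $L^1$, the transformed coefficients $\bar\sigma_n$ again satisfy the Nakao--Le~Gall condition on $s_n(I)$, with an ellipticity constant $\epsilon'$ and a bound on the majorant $f_b$ that depend only on $\epsilon$, $\|f\|_\infty$, $\sup_n\|b_n\sigma_n^{-2}\|_{L^1}$ and $\sup_n\|b_n\sigma_n\|_{I,\infty}$ — hence are uniform in $n$. (To feed this cleanly into Theorem \ref{Thm:main1} one should extend all coefficients to the whole line, e.g.\ by freezing $\bar\sigma_n$ at the boundary values of $s_n(I)$, which preserves both the ellipticity and the monotone-majorant bound; the solutions do not leave $s_n(I)$ so this changes nothing.) Then Corollary \ref{corollary:Ip} provides a constant $\bar C_0$, independent of $n$, with
\begin{align*}
\int_{S_n}|\bar\sigma(\bar x)-\bar\sigma_n(\bar x)|\,d\bar x\le\bar C_0 n^{-1},\qquad S_n=s_n(I)\cap s(I).
\end{align*}
Because $|\bar\sigma-\bar\sigma_n|^2\le 2\|f\|_\infty|\bar\sigma-\bar\sigma_n|$ on the common ellipticity range, this also gives the $L^1$-bound on the \emph{squares}, i.e.\ the hypothesis \eqref{con:L1} for the pair $(\bar\sigma,\bar\sigma_n)$ with constant $2\|f\|_\infty\bar C_0$. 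Applying Theorem \ref{Thm:main1} to $\bar X_n$ and $\bar X$ (with common initial value $\bar X_n(0)=s_n(X(0))$, whose $L^p$-moments are finite since $s_n$ grows at most exponentially and $\bE|X(0)|^p<\infty$ — here one uses that $s_n$ is globally Lipschitz, a fact established inside the proof of Lemma \ref{remark: invariant}) yields
\begin{align*}
\bE\!\left[\sup_{0\le t\le T}|\bar X(t)-\bar X_n(t)|^p\right]\le C_p'(\log n)^{-\frac{p}{4(p+1)}},
\end{align*}
and the analogous $(\log n)^{-1/2}$ bound for $p=1$.

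Finally I would transfer these estimates back through the inverse scale maps. We have $X(t)-X_n(t)=s^{-1}(\bar X(t))-s_n^{-1}(\bar X_n(t))$, which I bound by
$|s^{-1}(\bar X(t))-s^{-1}(\bar X_n(t))|+|s^{-1}(\bar X_n(t))-s_n^{-1}(\bar X_n(t))|$;
the first term is controlled because $(s^{-1})'=1/s'$ is bounded (by $c_{s1}^{-1}$, uniformly in $n$), and the second by $\sup_{\bar x\in S_n}|s^{-1}(\bar x)-s_n^{-1}(\bar x)|$, which was already shown in the proof of Corollary \ref{corollary:Ip} to be $O(n^{-1})$. Since $n^{-1}$ is dominated by $(\log n)^{-\frac{p}{4(p+1)}}$ for $n>2$, combining with the bound on $\bar X-\bar X_n$ and taking $p$-th moments (using $|a+b|^p\le 2^{p-1}(|a|^p+|b|^p)$) gives the claimed rates for $X-X_n$, with $C_p$ absorbing $C_p'$, the Lipschitz constant $c_{s1}^{-1}$, and the $O(n^{-1})$ term.

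\textbf{Main obstacle.} The substantive content is all in Corollary \ref{corollary:Ip}, which is assumed; within the present argument the only delicate point is the bookkeeping needed to make Theorem \ref{Thm:main1} applicable — namely checking that the ellipticity constant $\epsilon'$, the sup-norm $\|f_b\|_\infty$ of the transformed monotone majorant, and the transfer constant $\bar C_0$ are all genuinely \emph{uniform in $n$} (this is where the uniform bounds in \eqref{ass:cff1} and the uniform boundedness of $\{b_n\}$ are essential), and handling the restriction of $\bar\sigma_n$ to the subdomain $s_n(I)$ rather than all of $\bR$. I expect the domain/extension issue and the $n$-uniformity of constants to be the part requiring the most care; everything else is a routine change-of-variables estimate.
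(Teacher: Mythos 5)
Your proposal takes essentially the same route as the paper: the paper offers no written proof of Theorem \ref{Thm:main3} beyond the phrase ``in short, we obtain the extended results,'' and the intended argument is precisely your reduction --- remove the drifts via the scale functions, invoke Lemma \ref{remark: invariant} and Corollary \ref{corollary:Ip} to verify the hypotheses of Theorem \ref{Thm:main1} for $\bar{X}_n,\bar{X}$, and transfer the estimate back through $s^{-1},s_n^{-1}$ --- so your write-up in fact supplies more detail (the squaring of the $L^1$ bound to match \eqref{con:L1}, the $n$-uniformity of constants, the domain extension) than the paper does. One small correction: the transformed initial values $\bar{X}_n(0)=s_n(X(0))$ and $\bar{X}(0)=s(X(0))$ do \emph{not} coincide, so Theorem \ref{Thm:main1} cannot be applied verbatim; one needs the (trivial) extension allowing initial values differing by $O(n^{-1})$, which the term $u_m(Y_n(0))\le |Y_n(0)|$ in the Yamada--Watanabe estimate absorbs without changing the rate.
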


\begin{remark}
Assume $\bE [\sup_{ 0\leq t \leq T}|X (t)|^2] < \infty$. 
This implies that $\bE [ \int_{\bR} L_T^a (X) da  ] = \bE [\langle X \rangle]_T < \infty$, 
and then the same convergence rate is obtained under the stability rate of $(\ref{con:L1})$. 
\end{remark}

\providecommand{\bysame}{\leavevmode\hbox to3em{\hrulefill}\thinspace}
\providecommand{\MR}{\relax\ifhmode\unskip\space\fi MR }
\providecommand{\MRhref}[2]{%
  \href{http://www.ams.org/mathscinet-getitem?mr=#1}{#2}
}
\providecommand{\href}[2]{#2}

\end{document}